\newtheorem{theorem}{Theorem}%[section]
\newtheorem{corollary}[theorem]{Corollary}
\newtheorem{proposition}[theorem]{Proposition}
\newtheorem{lemma}[theorem]{Lemma}
\newtheorem{definition}[theorem]{Definition}
\newtheorem{example}[theorem]{Example}
\newcommand{\ff}{F(m\lambda;\lambda)}
\long\def\symbolfootnote[#1]#2{\begingroup%
\def\thefootnote{\fnsymbol{footnote}}\footnote[#1]{#2}\endgroup}
\begin{document}

\title{A new representation of mutually orthogonal frequency squares}
\author{Jonathan Jedwab \and Tabriz Popatia}
\date{8 March 2020 (revised 14 November 2020)}
\maketitle

\begin{center}
\emph{To Gary MacGillivray, in appreciation of his mathematics research, teaching, and service over several decades}

\end{center}

\symbolfootnote[0]{
Department of Mathematics,
Simon Fraser University, 8888 University Drive, Burnaby BC V5A 1S6, Canada.
\par
J.~Jedwab is supported by an NSERC Discovery Grant. 
\par
T.~Popatia was supported by an NSERC Undergraduate Student Research Award.
\par
Email: {\tt jed@sfu.ca}, {\tt tabriz\_popatia@sfu.ca}
}

\begin{abstract}
Mutually orthogonal frequency squares (MOFS) of type $\ff$ generalize the structure of mutually orthogonal Latin squares: rather than each of $m$ symbols appearing exactly once in each row and in each column of each square, the repetition number is~$\lambda \ge 1$.
A classical upper bound for the number of such MOFS is $\frac{(m\lambda-1)^2}{m-1}$.
We introduce a new representation of MOFS of type $\ff$, as a linear combination of $\{0,1\}$ arrays. We use this representation to give an elementary proof of the classical upper bound, together with a structural constraint on a set of MOFS achieving the upper bound.
We then use this representation to establish a maximality criterion for a set of MOFS of type $\ff$ when $m$ is even and $\lambda$ is odd, which simplifies and extends a previous analysis \cite{britz} of the case when $m=2$ and $\lambda$ is odd.
\end{abstract}

\section{Introduction}
\label{sec:intro}
Latin squares are a fundamental concept in combinatorial design theory, whose study is at least 300 years old \cite[p.~12]{anderson-colbourn-dinitz-griggs}.
A frequency square is a generalization of a Latin square, introduced by MacMahon \cite{macmahon} in 1898 under the name ``quasi-latin square'', subsequently studied in the 1940s by Finney~\cite{finney1}, and named in 1969 by Hedayat \cite{hedayat-phd} (see \cite{laywine-mullen} for a survey).

\begin{definition}
A \emph{frequency square (F-square) of type}  $\ff$ is an $m\lambda \times m\lambda$ array with elements belonging to the symbol set $\{1,2, \dots ,m\}$, where each symbol $j$ appears exactly $\lambda$ times in each row and in each column.
\end{definition}
\noindent
We note that some authors write $F(m\lambda;\lambda^m)$ instead of $\ff$.
The special case of an F-square of type $F(m;1)$ is a Latin square of order $m$. An F-square of type $\ff$ corresponds to a statistical experimental design offering more flexibility than a design based on a Latin square \cite[Section~2]{hedayat-seiden}.

%Laywine and Mullen \cite{laywine-mullen} define two sets of F-squares of type $\ff$ to be \textit{isomorphic} if one can be obtained from the other by a series of row and column permutations. In \cite{britz} the authors say two F-squares are \textit{equivalent} if one can be attained from the other by the operations of relabelling the symbols, taking its transpose, or row and column permutations. 
\begin{definition}
Two F-squares $S$ and $S'$ of type $F(m\lambda;\lambda)$ are \emph{orthogonal} if each ordered symbol pair $(j,j')$ appears exactly $\lambda^2$ times in the superposition of $S$ on~$S'$.
\end{definition} 
\noindent
A set of pairwise orthogonal F-squares of type $\ff$ is a set of \emph{mutually orthogonal frequency squares} (MOFS) of type $\ff$. 
The central question is:
\[
\mbox{How large can a set of MOFS of type $\ff$ be?}
\]
The following result provides an upper bound.

\begin{theorem}[Hedayat, Raghavarao, Seiden (1975) \protect{\cite[Theorem 2.1]{hedayat-raghavarao-seiden}}]
\label{thm:upper-bound}
The number of MOFS of type $F(m\lambda;\lambda)$ is at most $\frac{(m\lambda -1)^2}{(m-1)}$. 
\end{theorem}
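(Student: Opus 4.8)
The plan is to translate orthogonality of F-squares into genuine orthogonality of subspaces in a real inner product space, and then read off the bound by comparing dimensions. Concretely, for each F-square $S$ of type $\ff$ in the set and each symbol $j \in \{1, \dots, m\}$, I would form the \emph{indicator} $\{0,1\}$ array $A_j^S$ whose $(r,c)$ entry is $1$ precisely when $S$ has symbol $j$ in cell $(r,c)$. Working in the space $\mathbb{R}^{m\lambda \times m\lambda}$ equipped with the Frobenius inner product $\langle X, Y \rangle = \sum_{r,c} X_{rc} Y_{rc}$, the defining property of an F-square says each $A_j^S$ has every row sum and every column sum equal to $\lambda$. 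Subtracting the appropriate multiple of the all-ones array $J$, I would pass to the centered arrays $A_j^S - \tfrac1m J$, each a linear combination of the two $\{0,1\}$ arrays $A_j^S$ and $J$, and each lying in the subspace $\mathcal V$ of arrays whose row sums and column sums are all zero.

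Two facts drive the argument. First, $\dim \mathcal V = (m\lambda - 1)^2$: the $2m\lambda$ row- and column-sum constraints have exactly one dependency (the sum of all row sums equals the sum of all column sums), so they cut the dimension down from $(m\lambda)^2$ by $2m\lambda - 1$. Second, letting $W_S = \operatorname{span}\{A_j^S - \tfrac1m J : 1 \le j \le m\} \subseteq \mathcal V$, I would show $\dim W_S = m-1$: the arrays $A_1^S, \dots, A_m^S$ are linearly independent (evaluating $\sum_j c_j A_j^S = 0$ at any cell containing symbol $j$ forces $c_j = 0$), while their only linear relation after centering is $\sum_j (A_j^S - \tfrac1m J) = J - J = 0$.

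The crux is the orthogonality computation. For two orthogonal F-squares $S, S'$, the orthogonality definition gives $\langle A_j^S, A_{j'}^{S'} \rangle = \lambda^2$ for every symbol pair $(j,j')$; combining this with $\langle A_j^S, J \rangle = \langle J, A_{j'}^{S'} \rangle = m\lambda^2$ and $\langle J, J \rangle = m^2\lambda^2$ yields $\langle A_j^S - \tfrac1m J,\; A_{j'}^{S'} - \tfrac1m J \rangle = \lambda^2 - \lambda^2 - \lambda^2 + \lambda^2 = 0$. Hence $W_S \perp W_{S'}$ as subspaces whenever $S$ and $S'$ are orthogonal. Given a set of $N$ MOFS, the subspaces $W_{S^{(1)}}, \dots, W_{S^{(N)}}$ are therefore pairwise orthogonal, each nonzero of dimension $m-1$, and all contained in $\mathcal V$; pairwise-orthogonal subspaces meet only in $0$ and their dimensions add, so $N(m-1) = \sum_i \dim W_{S^{(i)}} \le \dim \mathcal V = (m\lambda - 1)^2$, which rearranges to the claimed bound.

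I expect the main obstacle to be conceptual rather than computational: setting up the representation so that the single inner-product identity above does all the work, and verifying cleanly that the centering is exactly the orthogonal projection away from $J$, so that no cross terms survive. The dimension count for $\mathcal V$ and the linear independence of the indicators are routine once the framework is in place. This argument also exposes the structural constraint promised in the abstract, since a set meeting the bound forces the $W_S$ to fill $\mathcal V$ completely, that is $\bigoplus_i W_{S^{(i)}} = \mathcal V$, which I would record for later use.
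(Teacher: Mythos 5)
Your proof is correct, but it is not the route the paper takes: you have essentially reconstructed the classical \emph{rank} argument (the paper attributes this style of proof to Hedayat, Raghavarao, and Seiden's original 1975 paper), whereas the paper's own proof (\cref{thm:T_structure}) is a deliberately elementary counting argument. You center each indicator array to $A_j^S - \tfrac1m J$, observe these land in the $(m\lambda-1)^2$-dimensional space $\mathcal V$ of arrays with zero row and column sums, show each F-square contributes an $(m-1)$-dimensional subspace $W_S$, and show MOFS-orthogonality makes the $W_S$ pairwise orthogonal, so dimensions add; all of these steps check out (the inner-product cancellation $\lambda^2 - \lambda^2 - \lambda^2 + \lambda^2 = 0$ is exactly right, as is the dimension count for $\mathcal V$ and the evaluation trick giving $\dim W_S = m-1$). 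The paper instead forms the single array $T = \sum_k \sum_{a>1} I_a(S_k)$, computes its entry sum and sum of squared entries using the same identities \eqref{I(S)^2} and \eqref{I(S)I(S')} that underlie your inner products, adjusts the first row and column to build an array $U$, and applies Cauchy--Schwarz to $U$; no vector-space machinery is invoked. The trade-off is in what the equality case yields: your argument gives the abstract statement $\bigoplus_i W_{S^{(i)}} = \mathcal V$ for a complete set, while the paper's argument gives a concrete combinatorial constraint, namely that (after normalizing $(S_k)_{11}=1$) the array $T$ must equal an explicit matrix with entries $0$, $\lambda(m\lambda-1)$, and $\lambda(m\lambda-2)$ in specified positions --- a form the authors can exploit directly. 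Your abstract equality condition is correct but would need further unpacking to recover that explicit structure.
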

\noindent
A set of MOFS attaining the upper bound in \cref{thm:upper-bound} is \emph{complete}. 
The special case of a set of MOFS of type $F(m;1)$ is the well-known concept of a set of mutually orthogonal Latin squares (MOLS) of order $m$ (see \cite{abel-colbourn-dinitz} for background), and the existence of a complete set of $m-1$ MOLS of order $m$ is equivalent to the existence of a projective plane of order $m$ and an affine plane of order $m$ \cite[Theorem~3.20]{abel-colbourn-dinitz}. 
Several ideas from the study of sets of MOLS have been adapted to investigate the existence pattern for sets of MOFS. 

The following construction, which depends on symmetric factorial designs, provides a complete set of MOFS of type $\ff$ when $m$ is a prime power. 

\begin{theorem}[Hedayat, Raghavarao, Seiden (1975) \protect{\cite[Theorem 3.1]{hedayat-raghavarao-seiden}}]
\label{thm:complete-primepower}
Let $m$ be a prime power and $h$ a positive integer. Then there exists a complete set of $\frac{(m^h-1)^2}{m-1}$ MOFS of type $F(m^h;m^{h-1})$.
\end{theorem}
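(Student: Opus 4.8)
The plan is to realise the F-squares inside the vector space $V=\mathrm{GF}(q)^h$, where $q=m$ is the given prime power. Identify the symbol set $\{1,\dots,m\}$ with $\mathrm{GF}(q)$ and index both the rows and the columns of an $m^h\times m^h$ array by the $q^h=m^h$ vectors of $V$. For each ordered pair of \emph{nonzero} linear functionals $(f,g)\in V^{\ast}\times V^{\ast}$ I define an array $S_{f,g}$ by $S_{f,g}(x,y)=f(x)+g(y)$ for $x,y\in V$. Since $S_{f,g}(x,0)=f(x)$ and $S_{f,g}(0,y)=g(y)$ recover $f$ and $g$, distinct pairs $(f,g)$ give distinct arrays. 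This is the finite-geometry form of the factorial-design construction referred to before the statement.

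First I would check that each $S_{f,g}$ is an F-square of type $F(m^h;m^{h-1})$. Fix a row, that is, fix $x$ and vary $y$. Because $g\ne 0$, the functional $g$ is surjective onto $\mathrm{GF}(q)$ with every fibre of size $|\ker g|=q^{h-1}$, so $y\mapsto f(x)+g(y)$ attains each symbol exactly $q^{h-1}=m^{h-1}$ times. The same argument applied to $f\ne 0$ handles the columns, so each symbol occurs the required $\lambda=m^{h-1}$ times in every row and every column.

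Next I would characterise orthogonality. Two arrays $S_{f_1,g_1}$ and $S_{f_2,g_2}$ are orthogonal precisely when the linear map $\Phi\colon V\times V\to \mathrm{GF}(q)^2$ defined by $\Phi(x,y)=\bigl(f_1(x)+g_1(y),\,f_2(x)+g_2(y)\bigr)$ takes each of the $q^2$ values in $\mathrm{GF}(q)^2$ equally often, namely $q^{2h}/q^2=q^{2h-2}=\lambda^2$ times. This happens if and only if $\Phi$ is surjective, which in turn holds if and only if its two coordinate functionals $(f_1,g_1)$ and $(f_2,g_2)$, regarded as elements of the dual of $V\times V\cong \mathrm{GF}(q)^{2h}$, are linearly independent over $\mathrm{GF}(q)$. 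Thus a family of these arrays forms a set of MOFS exactly when the associated vectors in $\mathrm{GF}(q)^{2h}$ are pairwise linearly independent.

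Finally I would count. Pairwise linearly independent nonzero vectors of $\mathrm{GF}(q)^{2h}$ correspond to distinct points of $\mathrm{PG}(2h-1,q)$, of which there are $\frac{q^{2h}-1}{q-1}$; the additional requirement $f\ne 0$ and $g\ne 0$ deletes exactly the points lying on the two coordinate subspaces $\{f=0\}$ and $\{g=0\}$, which meet only in the zero vector and each carry $\frac{q^h-1}{q-1}$ points. Choosing one representative from each surviving point yields $\frac{q^{2h}-1}{q-1}-2\cdot\frac{q^h-1}{q-1}=\frac{(q^h-1)^2}{q-1}=\frac{(m^h-1)^2}{m-1}$ pairwise orthogonal F-squares; by \cref{thm:upper-bound} this attains the upper bound, so the set is complete. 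The conceptual heart of the argument is the orthogonality criterion, since it converts the combinatorial ``$\lambda^2$ times'' condition into the clean statement that two functional pairs are linearly independent; the one place that needs care is then purely arithmetical, namely confirming that excluding \emph{both} coordinate subspaces (and not a single hyperplane) is exactly what $f,g\ne 0$ forces, and that their disjointness makes the count collapse precisely to the claimed bound.
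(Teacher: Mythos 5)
Your proof is correct. The paper itself gives no proof of this theorem---it is quoted from Hedayat, Raghavarao and Seiden, whose construction via symmetric factorial designs is exactly what you have written in finite-geometry language: your pairs of nonzero functionals $(f,g)$, taken up to scalar multiples, are precisely the ``pencils'' involving both row and column factors, and your count $\frac{q^{2h}-1}{q-1}-2\cdot\frac{q^h-1}{q-1}=\frac{(q^h-1)^2}{q-1}$ is their count of such pencils. All the key steps in your argument are sound: equidistribution of the values of a linear map holds iff the map is surjective, surjectivity of $\Phi$ holds iff the two coordinate functionals are linearly independent in $\mathrm{GF}(q)^{2h}$, representatives of distinct points of $\mathrm{PG}(2h-1,q)$ avoiding the two disjoint coordinate subspaces give pairwise orthogonal F-squares, and completeness then follows from \cref{thm:upper-bound}.
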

\noindent
Other combinatorial designs have been used to construct complete sets of MOFS with the same parameters as in \cref{thm:complete-primepower}, including linear permutation polynomials over GF$(m)$ \cite{mullen} and affine designs \cite{mavron}. 

A further construction of a complete set of MOFS of type $\ff$ depends on the existence of a Hadamard matrix of order $4n$ (which has long been conjectured for all positive integers $n$; see \cite{horadam-book} for background). 

\begin{theorem}[Federer (1977) \protect{\cite[Theorem 2.1]{federer}}]
\label{thm:complete-Hadamard}
Let $n$ be a positive integer and suppose a Hadamard matrix of order $4n$ exists. Then there exists a complete set of $(4n-1)^2$ MOFS of type $F(4n;2n)$. 
\end{theorem}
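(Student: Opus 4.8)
The plan is to represent each F-square of type $F(4n;2n)$ as a $\{\pm1\}$ matrix and to show that, for such binary F-squares, combinatorial orthogonality collapses to a single linear-algebraic identity; a family of rank-one matrices built from the Hadamard matrix then supplies the complete set. \textbf{Step 1 (Reformulation).} Here $m=2$, so I identify the two symbols with $+1$ and $-1$, turning each F-square into a $4n\times 4n$ matrix $S$ with entries in $\{\pm1\}$ in which every row sum and every column sum is $0$ (each symbol occurs exactly $2n$ times in each line). For two such matrices $S,S'$ I would express each of the four ordered-pair counts in the superposition in terms of $\sum_{i,j}S_{ij}$, $\sum_{i,j}S'_{ij}$, and the entrywise inner product $\langle S,S'\rangle=\sum_{i,j}S_{ij}S'_{ij}$. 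Because the two single sums vanish, a one-line computation gives each count as $\tfrac14\bigl(16n^2\pm\langle S,S'\rangle\bigr)$, so every ordered pair occurs exactly $4n^2=\lambda^2$ times precisely when $\langle S,S'\rangle=0$. Thus a set of MOFS of type $F(4n;2n)$ is exactly a set of zero-line-sum $\{\pm1\}$ matrices that are pairwise orthogonal under the Frobenius inner product.

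\textbf{Step 2 (Construction).} I would normalize the given Hadamard matrix $H$ of order $4n$ so that its first row $r_1$ is the all-ones vector; the remaining rows $r_2,\dots,r_{4n}$ are then $\{\pm1\}$ vectors that are pairwise orthogonal and each orthogonal to $r_1$, so each has exactly $2n$ entries equal to $+1$. For every pair $(i,j)$ with $2\le i,j\le 4n$ I set $A_{ij}=r_i^{\top}r_j$, the rank-one outer product whose $(a,b)$ entry is $(r_i)_a(r_j)_b$. Each row of $A_{ij}$ is $\pm r_j$ and each column is $\pm r_i$, so $A_{ij}$ has zero row and column sums and is a genuine F-square of type $F(4n;2n)$. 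This produces exactly $(4n-1)^2$ squares, which matches the upper bound of \cref{thm:upper-bound} when $m=2$.

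\textbf{Step 3 (Orthogonality and completion).} For $(i,j)\ne(k,l)$ the inner product factors through the tensor structure as $\langle A_{ij},A_{kl}\rangle=\bigl(\sum_a (r_i)_a(r_k)_a\bigr)\bigl(\sum_b (r_j)_b(r_l)_b\bigr)=\langle r_i,r_k\rangle\,\langle r_j,r_l\rangle$, and orthogonality of the Hadamard rows forces at least one factor to vanish. By Step 1 the family $\{A_{ij}\}$ is a set of MOFS of type $F(4n;2n)$, and since its size meets the bound the set is complete.

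I expect the only genuinely substantive point to be the reformulation in Step 1: recognizing that for two-symbol F-squares the four combinatorial orthogonality conditions reduce to the single equation $\langle S,S'\rangle=0$. Once that is in place, the decisive observation is merely that the outer-product construction converts row-orthogonality of $H$ into Frobenius-orthogonality of the $A_{ij}$. The remaining verifications — the zero line sums, the correct symbol frequencies, and the count $(4n-1)^2$ — are routine and should not present any obstacle.
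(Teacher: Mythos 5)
Your proposal is correct, but note first that the paper itself contains no proof of this statement: \cref{thm:complete-Hadamard} is quoted as a known result of Federer, so there is no internal argument to compare against, and your write-up stands as a self-contained proof. Step 1 is sound: writing each binary F-square as a $\{\pm1\}$ matrix with zero row and column sums, the four ordered-pair counts in a superposition are $\tfrac14\bigl(16n^2 \pm \langle S,S'\rangle\bigr)$, so orthogonality of F-squares is exactly Frobenius orthogonality. In fact this is the $m=2$ specialization of the paper's indicator-square condition \eqref{I(S)I(S')}: the $\pm1$ matrix associated to $S$ is $2I_1(S)-J$, and by \eqref{I(S)^2} one computes $\langle 2I_1(S)-J,\,2I_1(S')-J\rangle = 4\,I_1(S)\circ I_1(S') - 16n^2$, which vanishes precisely when $I_1(S)\circ I_1(S')=\lambda^2=4n^2$. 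Steps 2 and 3 are also correct: the outer products $A_{ij}=r_i^{\top}r_j$ for $2\le i,j\le 4n$ built from a row-normalized Hadamard matrix have every row equal to $\pm r_j$ and every column equal to $\pm r_i$, hence are F-squares of type $F(4n;2n)$, and $\langle A_{ij},A_{kl}\rangle = \langle r_i,r_k\rangle\,\langle r_j,r_l\rangle = 0$ whenever $(i,j)\ne(k,l)$. One point you should make explicit is that the $(4n-1)^2$ squares are pairwise \emph{distinct}, since otherwise the set could be smaller than claimed; but this is immediate from your own computation, because $\langle A,A\rangle = 16n^2 \ne 0$ for any $\pm1$ matrix, so two Frobenius-orthogonal squares cannot coincide. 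Finally, $(4n-1)^2 = \frac{(m\lambda-1)^2}{m-1}$ with $m=2$, $\lambda=2n$, so the set is complete by \cref{thm:upper-bound}. Your approach also meshes naturally with the paper's philosophy: it is exactly the kind of linear-algebraic reformulation (via $\{0,1\}$ or $\{\pm1\}$ arrays and inner products) that the authors use in \cref{sec:new-proof,sec:maximality}, here applied to reprove a cited construction rather than the bounds and maximality criteria.
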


All known constructions of complete sets of F-squares of type $\ff$ having $m>2$ require $m$ to be a prime power, and the only known examples having $m=2$ are as described in \cref{thm:complete-Hadamard}. Jungnickel, Mavron and McDonough showed in 2001 how to unify all such constructions in terms of nets~\cite{jungnickel-mavron-mcdonough}.

Whereas \cref{thm:complete-Hadamard} shows the existence of a complete set of MOFS of type $F(2\lambda;\lambda)$ when $\lambda$ is even (subject to the existence of a Hadamard matrix of order $2\lambda$), a recent result established nonexistence when $\lambda > 1$ is odd by making a connection with resolvable designs.

\begin{theorem}[Britz, Cavenagh, Mammoliti, Wanless (2019+) \protect{\cite[Corollary~11]{britz}}]
\label{thm:no-complete}
There is no complete set of MOFS of type $F(2\lambda;\lambda)$ when $\lambda > 1$ is odd.
\end{theorem}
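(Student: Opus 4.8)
The plan is to specialize the new $\{0,1\}$-array representation to $m=2$ and then derive a contradiction from a rank computation over $\mathrm{GF}(2)$ that is available only when $\lambda$ is odd. For $m=2$ an F-square of type $F(2\lambda;\lambda)$ is the same as an $n\times n$ matrix $A$ with $n=2\lambda$ and entries in $\{+1,-1\}$ (encoding the two symbols), all of whose rows and columns sum to $0$, and a short count shows that two such squares are orthogonal precisely when $\langle A,A'\rangle=0$ in the entrywise inner product. The matrices with zero line sums form $V=W\otimes W$ with $W=\{v:\sum_i v_i=0\}$ of dimension $n-1$, so $\dim V=(n-1)^2=(2\lambda-1)^2$; a complete set is thus an orthogonal basis $A_1,\dots,A_N$ of $V$ consisting of $\pm1$ matrices, where $N=(2\lambda-1)^2$, recovering the bound of \cref{thm:upper-bound} when $m=2$. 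Writing $A_i=J-2C_i$ with $C_i\in\{0,1\}^{n\times n}$ and $J$ the all-ones matrix, orthogonality forces $\langle C_i,C_j\rangle=\lambda^2$ over the integers for $i\ne j$, and each $C_i$ has every line sum equal to $\lambda$.

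Next I would reduce modulo $2$; this is the one place where the parity of $\lambda$ is used. Because $\lambda$ is odd, the reductions $\bar C_i$ over $\mathrm{GF}(2)$ satisfy $\langle\bar C_i,\bar C_j\rangle=\lambda^2\equiv 1$ for $i\ne j$, and since every line sum $\lambda$ is odd, each $\bar C_i$ lies in the coset $w_0+V_2$, where $V_2$ is the $\mathrm{GF}(2)$-space of matrices with zero line sums ($\dim V_2=(n-1)^2=N$) and $w_0$ is any fixed matrix with all line sums $1$. Putting $\bar C_i=w_0+v_i$ with $v_i\in V_2$ and $a_i=\langle w_0,v_i\rangle$, the pairwise conditions collapse to a single identity for the Gram matrix $\Phi=[\langle v_i,v_j\rangle]$ of the $v_i$ under the form that $V_2$ inherits from the ambient entrywise inner product: over $\mathrm{GF}(2)$, $\Phi=(J_N+I_N)+\mathbf a\,\mathbf 1^{\top}+\mathbf 1\,\mathbf a^{\top}$, where $\mathbf a=(a_1,\dots,a_N)^{\top}$ and $\mathbf 1$ is all-ones.

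The contradiction then comes from bounding $\operatorname{rank}\Phi$ two ways. From the displayed identity, $J_N+I_N$ has rank $N-1$ over $\mathrm{GF}(2)$ (its kernel is spanned by $\mathbf 1$ because $N$ is odd), and the two rank-one terms lower the rank by at most $2$, so $\operatorname{rank}\Phi\ge N-3$. On the other hand $\Phi$ is a Gram matrix for the form that $V_2=W_2\otimes W_2$ inherits, which is the tensor square of the form on $W_2$; since $n$ is even the all-ones vector lies in $W_2$, so that form is degenerate with one-dimensional radical and has rank $n-2$, whence the tensored form on $V_2$ has rank $(n-2)^2$ and $\operatorname{rank}\Phi\le (n-2)^2$. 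Combining gives $(n-1)^2-3\le (n-2)^2$, i.e. $2n-3\le 3$, which fails once $n=2\lambda\ge 4$. As $\lambda>1$ is odd we have $n\ge 6$, the desired contradiction; note that $\lambda=1$ is correctly excluded, since then $n=2$ and the requirement $n\ge4$ is not met.

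I expect the crux to be the upper bound on $\operatorname{rank}\Phi$: one must identify the inner product inherited by $V_2$ as the tensor square of the degenerate form on $W_2$ and compute its rank, the degeneracy (caused by $\mathbf 1\in W_2$ for even $n$) being exactly what keeps the rank at $(n-2)^2$ rather than $N$. A related point deserving care is the single use of the hypothesis: for even $\lambda$ the off-diagonal entries $\lambda^2$ vanish mod $2$, the term $J_N$ disappears from $\Phi$, the rank drops to at most $2$, and no contradiction arises—consistent with the complete sets supplied by \cref{thm:complete-Hadamard}.
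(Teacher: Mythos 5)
Your proof is correct, but it is necessarily a different route from the paper's, because the paper does not prove \cref{thm:no-complete} at all: the result is quoted from \cite[Corollary~11]{britz}, whose argument (as the paper notes) goes through a connection with resolvable designs. Your setup is the paper's indicator-square representation specialized to $m=2$ --- writing $A=J-2C$ turns \eqref{I(S)^2} and \eqref{I(S)I(S')} into your conditions that each $C_i$ has all line sums $\lambda$ and $\langle C_i,C_j\rangle=\lambda^2$ --- but your decisive step, the two-sided bound on the $\mathrm{GF}(2)$-rank of the Gram matrix $\Phi$, has no counterpart in the paper and is genuinely stronger than the paper's parity machinery. Indeed, reducing the conclusion of \cref{thm:T_structure} modulo $2$ shows that a hypothetical complete set with $m=2$ and $\lambda$ odd would satisfy a non-constant full relation with $(x,y)=(1,2\lambda-1)$, but the congruence of \cref{prop:mod8} is then satisfied identically (the two sides differ by $4(\lambda-1)^2\equiv 0\pmod 8$ for odd $\lambda$), so the criteria of \cref{sec:maximality} alone cannot rule out completeness; your rank argument does rule it out, via $\operatorname{rank}\Phi\ge N-3$ (from $\operatorname{rank}(J_N+I_N)=N-1$ for odd $N$) against $\operatorname{rank}\Phi\le(n-2)^2$ (from the fact that the paper's $\circ$ product restricted to $V_2=W_2\otimes W_2$ is the tensor square of a form of rank $n-2$, degenerate exactly because $\mathbf{1}\in W_2$ when $n$ is even), and $(n-1)^2-3\le(n-2)^2$ fails once $n\ge4$. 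I checked the details and found only two facts used without being stated, both needed for your identity $\Phi=(J_N+I_N)+\mathbf{a}\mathbf{1}^{\top}+\mathbf{1}\mathbf{a}^{\top}$: that $\langle w_0,w_0\rangle\equiv 0\pmod 2$ (the entry sum of $w_0$ is a sum of $n$ odd row sums with $n=2\lambda$ even) and that $\langle v_i,v_i\rangle\equiv 0\pmod 2$ (each $v_i\in V_2$ has even line sums); with those two lines added the proof stands. Compared with \cite{britz}, you trade the design-theoretic viewpoint for self-contained linear algebra over $\mathrm{GF}(2)$ that fits naturally alongside the framework of \cref{sec:indicator-squares,sec:maximality}, which is an attractive by-product: it shows the indicator-square representation can prove the nonexistence theorem that this paper only cites.
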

\noindent
In view of \cref{thm:no-complete}, it is natural to ask:
\begin{enumerate}[Q1.]
\item What is the maximum size of a set of MOFS of type $\ff$ when $\lambda > 1$ is odd?
\item
When is a set of MOFS of type $\ff$ maximal (that is, not extendible to a larger such set) but not complete?
\end{enumerate}
These questions are explored in the recent paper by Britz et al. \cite{britz} when $\lambda$ is odd. For example, it is shown computationally that for MOFS of type $F(6;3)$, the maximum size of a set is $17$ rather than the upper bound of $25$ given by \cref{thm:upper-bound}, and there are maximal sets of size $t$ for each $t$ satisfying $t \in \{1,17\}$ or $5 \le t \le 15$. 

In this paper, we introduce a representation of MOFS of type $\ff$ as a linear combination of $\{0,1\}$ arrays (\cref{sec:indicator-squares}). We use this representation to give a new elementary proof of the upper bound of \cref{thm:upper-bound}, together with a structural constraint on a complete set of MOFS (\cref{sec:new-proof}). 
We then use this representation to establish a maximality criterion for a set of MOFS of type $\ff$ when $m$ is even and $\lambda$ is odd (\cref{sec:maximality}), extending the analysis of \cite{britz} for the case when $m=2$ and $\lambda$ is odd.

\section{Indicator squares}
\label{sec:indicator-squares}

We begin by introducing a representation of an F-square of type $\ff$ as a linear combination of $\{0,1\}$ arrays. We shall use this representation in \cref{sec:new-proof} in the new proof of \cref{thm:upper-bound}, and in \cref{sec:maximality} to establish maximality criteria.
The \emph{indicator function} of a condition $X$ is the function
\[
I[X] = \begin{cases} 
 1 & \mbox{if $X$ holds,} \\ 
 0 & \mbox{otherwise}.
\end{cases}
\]

\begin{definition}
\label{defn:ind-square}
Let $S = (S_{ij})$ be an F-square of type $\ff$. 
For $a \in \{1,2,\dots,m\}$, the \emph{indicator square} $I_a(S)$ of $S$ with respect to $a$ is the $\{0,1\}$ array of size $m\lambda \times m\lambda$ whose $(i,j)$ entry is $I[S_{ij}=a]$.
\end{definition}
\noindent
Using \cref{defn:ind-square}, an F-square $S$ of type $\ff$ may be written as $\sum_{a=1}^m aI_a(S)$.

\begin{example}
Let 
\[
S =  \begin{bmatrix}
1& 	2& 	3&	1&	2&	3 \\
3&	1&	2&  	3& 	2&	1 \\
2& 	3&	1&  	2&	1& 	3 \\
1& 	1&	2&	3&	3& 	2 \\
3&	3& 	1& 	2&	1&	2 \\
2&	2&	3&  	1& 	3&	1 
\end{bmatrix}
\]
be an F-square of type $F(6;2)$. Then the indicator square of $S$ with respect to $1$, $2$, $3$ is
\[
I_1(S)  = \begin{bmatrix}
1	&0	&0	&1 	&0	&0 \\
0	&1 	&0 	&0	&0 	&1 \\
0	&0	&1      &0	&1 	&0 \\
1	&1    	&0      &0      &0  	&0 \\
0	&0	&1 	&0	&1 	&0 \\
0	&0	&0 	&1 	&0 	&1
\end{bmatrix}, \quad
I_2(S)  = \begin{bmatrix}
0	&1	&0	&0 	&1 	&0 \\
0	&0	&1     	&0	&1 	&0 \\
1	&0	&0	&1 	&0	&0 \\
0	&0      &1    	&0     	&0  	&1 \\
0	&0	&0 	&1 	&0 	&1 \\
1	&1      &0     	&0   	&0  	&0
\end{bmatrix}, 
\]
\[
I_3(S)  = \begin{bmatrix}
0	&0    	&1    	&0     	&0  	&1 \\
1	&0	&0	&1 	&0	&0 \\
0	&1 	&0 	&0	&0 	&1 \\
0	&0    	&0     	&1   	&1  	&0 \\
1	&1    	&0     	&0   	&0  	&0 \\
0	&0	&1 	&0	&1 	&0 
\end{bmatrix},
\]
respectively, and $S = I_1(S) + 2I_2(S) + 3I_3(S)$.
\end{example}

Let $A = (A_{ij})$ and $B = (B_{ij})$ be arrays of the same size. We write 
\[
A \circ B := \sum_{i,j} A_{ij} B_{ij}
\] 
for the sum of the entries of the elementwise product of $A$ and $B$. 
With this notation, $A \circ J$ is the sum of the entries of $A$ (where $J$ is the all-ones array whose size is given by context), and  $A \circ A$ is the sum of squares of the entries of $A$.
For an F-square $S$ of type $\ff$, we have
\begin{equation} 
\label{I(S)^2}
I_a(S) \circ I_a(S) = I_a(S) \circ J = m\lambda^2 \quad \mbox{for each $a \in \{1,2,\dots, m\}$},
\end{equation}
and F-squares $S, S'$ of type $\ff$ are orthogonal if and only if  
\begin{equation} 
\label{I(S)I(S')}
I_a(S) \circ I_b(S') = \lambda^2 \text{ for all } a,b \in \{1,2,\dots, m\}.
\end{equation}

\section{Proof of \cref{thm:upper-bound} using indicator squares}
\label{sec:new-proof}
The original proof of \cref{thm:upper-bound} by Hedayat, Raghavarao, and Seiden \cite{hedayat-raghavarao-seiden} uses an argument based on the matrix rank. 
Jungnickel, Mavron, and McDonough \cite[Theorem~3.5]{jungnickel-mavron-mcdonough} give an alternative counting proof, making use of an equivalence between sets of MOFS and nets, and characterizing the case when equality holds (namely, that the set of MOFS is complete).
In this section, we give a further elementary proof of \cref{thm:upper-bound} that provides a new structural constraint on the case when equality holds, using indicator squares. Our method is inspired by the proof of an upper bound for the number of rows of a difference matrix over a group given by van Greevenbroek and Jedwab \cite[Theorem~2.1]{vangreevenbroek-jedwab}, which is in turn adapted from a more general result due to Jungnickel \cite[Proposition~3.1]{jungnickel-diffmatrices}.

\begin{theorem}
\label{thm:T_structure}
Suppose $S_1,S_2,...,S_t$ is a set of MOFS of type $F(m\lambda; \lambda)$. Relabel the F-squares if necessary so that $(S_k)_{11} = 1$ for each $k \in \{1,2,\dots ,t\}$. Then $t \le \frac{(m\lambda-1)^2}{m-1}$, and equality holds if and only if 
\[
\sum_k\sum_{a>1}I_a(S_k) = \begin{bmatrix}
0 &\lambda(m\lambda-1) & \dots & \lambda(m\lambda-1) \\
\lambda(m\lambda-1) & \lambda(m\lambda-2)  & \dots  &\lambda(m\lambda-2) \\
\vdots  & \vdots  & \ddots & \vdots \\
\lambda(m\lambda-1)  &\lambda(m\lambda-2)  & \dots  &\lambda(m\lambda-2) 
\end{bmatrix}.
\]
\end{theorem}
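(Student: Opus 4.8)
The plan is to work with the single array $T := \sum_k\sum_{a>1} I_a(S_k)$ appearing in the statement, and to extract the bound from the Cauchy--Schwarz inequality for the Frobenius inner product $\circ$. First I would record the elementary properties of $T$ that follow from the normalization and from \eqref{I(S)^2} and \eqref{I(S)I(S')}. Since $(S_k)_{11}=1$ for every $k$, no term $I_a(S_k)$ with $a>1$ contributes to the $(1,1)$ entry, so $T_{11}=0$. Since symbol $a$ occurs $\lambda$ times in each row and each column, every row sum and every column sum of $T$ equals $t\lambda(m-1)$. And writing $P_k := J - I_1(S_k)$ and expanding $P_k \circ P_l$ by means of \eqref{I(S)^2} and \eqref{I(S)I(S')} gives the closed forms $T\circ J = tm\lambda^2(m-1)$ and $T\circ T = t\lambda^2(m-1)\bigl[m+(t-1)(m-1)\bigr]$.

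The crux is the choice of a single test array against which to pair $T$. I would take $B := z z^{\mathsf T}$, that is $B_{ij}=z_iz_j$, where $z = (m\lambda-1,-1,\dots,-1)^{\mathsf T} = m\lambda\, e_1 - \mathbf 1$ (here $e_1$ is the first unit vector and $\mathbf 1$ the all-ones vector). The decisive feature is $\mathbf 1^{\mathsf T} z = 0$, so that $B \circ J = (\mathbf 1^{\mathsf T} z)^2 = 0$ and $B\circ B = \|z\|^4 = m^2\lambda^2(m\lambda-1)^2$. Pairing $T$ with $B$ then collapses, via $z = m\lambda\, e_1 - \mathbf 1$ together with the row/column-sum data and $T_{11}=0$, to the single value $T\circ B = z^{\mathsf T} T z = -tm\lambda^2(m-1)$.

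To make Cauchy--Schwarz strong enough I would pair $B$ not with $T$ but with the centred array $W := T - \tfrac{T\circ J}{m^2\lambda^2}\,J$. Because $B\circ J=0$ we still have $W\circ B = T\circ B$, while the reduced norm $W\circ W = T\circ T - \tfrac{(T\circ J)^2}{m^2\lambda^2} = t\lambda^2(m-1)$ is exactly what is required. Then $(W\circ B)^2 \le (W\circ W)(B\circ B)$ reduces, after cancelling the common positive factor $tm^2\lambda^4(m-1)$, precisely to $t(m-1)\le (m\lambda-1)^2$, which is the asserted bound. For the equality clause, equality in the bound is equivalent to equality in Cauchy--Schwarz, hence to $W = cB$ for a scalar $c$; evaluating $c = (W\circ B)/(B\circ B) = -1/m$ and unwinding $T = W + \tfrac{(m\lambda-1)^2}{m}\,J$ produces the stated matrix entrywise. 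The converse is immediate, since the stated matrix has first-row sum $\lambda(m\lambda-1)^2$, which forces $t\lambda(m-1)=\lambda(m\lambda-1)^2$.

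I expect the main obstacle to be discovering the test array $B$: it is reverse-engineered from the claimed equality case, where $W$ must be rank one and proportional to $z z^{\mathsf T}$, and without centring by the grand mean the identical computation yields only a strictly weaker bound. Once $z$ and the centring step are in hand, every remaining step is a routine evaluation of the four inner products $T\circ J$, $T\circ T$, $T\circ B$, and $B\circ B$.
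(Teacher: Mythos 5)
Your proof is correct: all four evaluations check out ($T\circ J=tm\lambda^2(m-1)$, $T\circ T=t\lambda^2(m-1)\bigl(t(m-1)+1\bigr)$, $T\circ B=z^{\mathsf T}Tz=-tm\lambda^2(m-1)$, $W\circ W=t\lambda^2(m-1)$), the equality analysis reproduces the stated matrix entrywise, and you even make the converse implication explicit via the first-row sum, a direction the paper's proof leaves implicit. Your route shares the paper's skeleton --- the same array $T$, the same invariants (grand sum, sum of squares, first-row and first-column sums, $T_{11}=0$), and a final appeal to Cauchy--Schwarz --- but the deployment is genuinely different. The paper replaces $T$ by the array $U$ obtained by subtracting $\lambda$ along the first row and column and adding $\lambda(m\lambda-2)$ at the corner, then applies Cauchy--Schwarz against $J$, so that equality means ``all entries of $U$ coincide''; you instead center $T$ and pair it with the rank-one array $B=zz^{\mathsf T}$ with $z=m\lambda e_1-\mathbf 1$, so that equality means ``$W$ is proportional to $B$''. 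The two mechanisms are linked by the exact identity $U=T+\tfrac{1}{m}(B-J)$ (check it entrywise: $\tfrac{1}{m}\bigl((m\lambda-1)^2-1\bigr)=\lambda(m\lambda-2)$ at the corner, $\tfrac{1}{m}\bigl(-(m\lambda-1)-1\bigr)=-\lambda$ on the rest of the first row and column, $0$ elsewhere), so requiring $U$ to be constant is precisely requiring $W=-\tfrac{1}{m}B$, and each argument translates mechanically into the other. What your version buys is structural transparency: it exhibits the extremal $T$ as a rank-one perturbation of a constant array, it explains where the test array comes from, and your observation that omitting the centering yields only the vacuous bound $t(m-1)\le\bigl(t(m-1)+1\bigr)(m\lambda-1)^2$ is accurate and instructive. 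What the paper's version buys is economy: no test array has to be guessed, and the inequality used is just ``the variance of the entries of $U$ is nonnegative''. One pedantic point: cancelling the factor $tm^2\lambda^4(m-1)$ presumes $t\ge 1$ and $m\ge 2$, but the bound is trivial when $t=0$, and $m=1$ makes the theorem's statement degenerate, so nothing is lost.
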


\begin{proof}
Let $T=(T_{ij})$ be the $m\lambda \times m\lambda$ array given by 
 \[
 T = \sum_k\sum_{a>1}I_a(S_k).
 \]
We calculate the sum of the entries of $T$ as
\begin{align}
\sum_{i,j}T_{ij} &= T \circ J 		\nonumber \\
&= \sum_k \sum_{a>1} I_a(S_k) \circ J  	\nonumber \\
&= t(m-1)m\lambda^2			\label{eqn:T_sum}
\end{align}
from \eqref{I(S)^2}, and the sum of squares of the entries of $T$ as
\begin{align*}
\sum_{i,j} T_{ij}^2 = T \circ T &= \sum_{k,\ell} \sum_{a,b >1} I_a(S_k) \circ I_b(S_\ell) \\
&= \sum_{k \neq \ell} \sum_{a,b>1} I_a(S_k)\circ I_b(S_\ell) + \sum_k \sum_{a,b>1} I_a(S_k)\circ I_b(S_k) \\
&= t(t-1)(m-1)^2\lambda^2 + t(m-1)m\lambda^2
\end{align*}
by \eqref{I(S)I(S')} and \eqref{I(S)^2}, noting that $I_a(S_k)\circ I_b(S_k)  = 0$ for all $a \neq b$. Therefore 
\begin{align}\label{eqn: T^2}
\sum_{i,j}T_{ij}^2 = t(m-1)\lambda^2\big(t(m-1)+1\big).
\end{align}
Since $(S_k)_{11} = 1$ for each $k$, we have $T_{11} = 0$ and therefore
\begin{align}
\sum_{i>1} T_{i1} &= \sum_i T_{i1} \nonumber\\ 
&= \sum_k \sum_{a>1} \sum_i I_a(S_k)_{i1} \nonumber\\
&= t(m-1)\lambda 	\label{eqn:T_first_column_sum}
\end{align}
because $S_k$ is an F-square, and similarly
\begin{align}\label{eqn:T_first_row_sum}
\sum_{j>1} T_{1j} =  t(m-1)\lambda.
\end{align}

Now define an $m\lambda \times m\lambda$ array $U = (U_{ij})$ by 
\begin{equation*}
U_{ij} = 
\begin{cases} 
T_{11} +  \lambda(m\lambda - 2)	& \mbox{for $(i,j)=(1,1)$}, \\ 
T_{ij} - \lambda 		& \mbox{for $(i >1, j=1)$ and $(i = 1, j>1)$},\\
T_{ij}    			& \mbox{for $i,j > 1$}.
\end{cases}
\end{equation*}
We calculate the sum of the entries of $U$ as
\begin{align}
\sum_{i,j} U_{ij} 
 &= \sum_{i,j} T_{ij} +\lambda(m\lambda-2) - 2\lambda(m\lambda-1) \nonumber \\
 &= m\lambda^2 (t(m-1) - 1)					\label{eqn:sumU}
\end{align}
by substitution from \eqref{eqn:T_sum}, and the sum of squares of the entries of $U$ as
\begin{align*}
\sum_{i,j} U_{ij}^2 
 &= \Big(T_{11} +  \lambda(m\lambda - 2)\Big)^2 + \sum_{i>1}(T_{i1}-\lambda)^2 + \sum_{j>1}(T_{1j}-\lambda)^2 + \sum_{i,j > 1}T_{ij}^2 \\
 &= \sum_{i,j} T_{ij}^2 - 2\lambda\bigg( \sum_{i>1}T_{i1} + \sum_{j>1}T_{1j}\bigg) + \lambda^2(m\lambda -2)^2 + 2\lambda^2(m\lambda -1)\\
 &=t(m-1)\lambda^2\big(t(m-1)+1\big)- 2\lambda \cdot 2t(m-1)\lambda  + \\
 &\phantom{==} \lambda^2(m^2 \lambda^2 - 2 m\lambda +2)
\end{align*}
by substitution from \eqref{eqn: T^2}, \eqref{eqn:T_first_column_sum}, and \eqref{eqn:T_first_row_sum}. Therefore 
\begin{align*}
\sum_{i,j} U_{ij}^2 = \lambda^2 \Big (t^2(m - 1)^2-3t(m - 1) +m^2 \lambda^2 -2m\lambda+2\Big ).
\end{align*}
Substitute this and \eqref{eqn:sumU} into the Cauchy-Schwarz inequality
\begin{equation}
\label{eqn:CS}
\bigg(\sum_{i,j}U_{ij}\bigg)^2 \leq m^2\lambda^2\sum_{i,j}U_{ij}^2
\end{equation}
and simplify to obtain $t \leq \frac{(m\lambda -1)^2}{m -1}$.
Equality holds in \eqref{eqn:CS} if and only if the $U_{ij}$ are equal for all $i,j$, in which case from \eqref{eqn:sumU} and $t = \frac{(m\lambda -1)^2}{m -1}$ we obtain $U_{ij} = \lambda(m\lambda - 2)$ for all $i,j$. In that case, by definition of $U$ we have
\[
T = 
\begin{bmatrix}
0 &\lambda(m\lambda-1) & \dots & \lambda(m\lambda-1) \\
\lambda(m\lambda-1) & \lambda(m\lambda-2)  & \dots  &\lambda(m\lambda-2) \\
\vdots  & \vdots  & \ddots & \vdots \\
\lambda(m\lambda-1)  &\lambda(m\lambda-2)  & \dots  &\lambda(m\lambda-2)
\end{bmatrix}.
\]
\end{proof}

\section{A maximality criterion using indicator\\squares}
\label{sec:maximality}
In this section, we address question (Q2) of \cref{sec:intro}: when is a set of MOFS of type $\ff$ maximal (not extendible to a larger set) but not complete? As $m$ and $\lambda$ grow, it quickly becomes computationally infeasible to determine the maximality of a particular set of MOFS by direct comparison with all other F-squares. For example, the number of F-squares of type $F(6;3)$ is $297~200$ whereas the number of F-squares of type $F(8;4)$ is $116~963~796~250$~\cite{OEIS}. 
Nonetheless, Britz et al. \cite{britz} showed how to adapt parity arguments from the study of maximal sets of MOLS in order to obtain a theoretical criterion for the maximality of a set of MOFS of type $F(2\lambda;\lambda)$ when $\lambda$ is odd. They also derived necessary conditions on the MOFS parameters for the criterion to hold.
In this section, we extend the analysis of \cite{britz} to the case of MOFS of type $F(m\lambda;\lambda)$ for all even $m$ and odd $\lambda$, using indicator squares as introduced in \cref{sec:indicator-squares} and streamlining the arguments.

We shall derive a maximality criterion in \cref{thm:maximal_m}, depending on a sum of indicator squares having the regular block structure described in \cref{def:full_relation}.
Write $\mathbf{0}$ for the all-zeroes array whose size is given by context, and as before write $J$ for the all-ones array. For an array $A$, write $A \bmod 2$ for the elementwise reduction of $A$ modulo~$2$.
 
\begin{definition}
\label{def:full_relation}
Let $x,y$ be integers for which $0 \le x,y \le m\lambda$ and $x,y$ do not both belong to $\{0,m\lambda\}$. A set $\{S_1,S_2,\dots,S_t\}$ of F-squares of type $\ff$ satisfies a \emph{non-constant full relation} with respect to $x$ and $y$ if, for some permutation of rows and columns, the array
$\big(\sum_{k=1}^t I_1(S_k)\big) \bmod 2$ has block structure 
\begin{equation} 
\label{eqn:T_0mod2}
\begin{split}
\begin{tikzpicture}[scale=0.7]
\draw (0.2,0)--(0,0)--(0,4)--(0.2,4);
\draw (4.8,0)--(5,0)--(5,4)--(4.8,4);
\node at (-0.5,3) {$x$};
\node at (-1.4,1.3) {$m\lambda-x$};
\node at (1,4.5) {$y$};
\node at (3.5,4.5) {$m\lambda-y$};
\draw (2,0)--(2,4);
\draw (0.1,2.5)--(4.9,2.5);
\node at (1,3.3) {$\mathbf{0}$};
\node at (3.5,3.3) {$J$};
\node at (1,1.3) {$J$};
\node at (3.5,1.3) {$\mathbf{0}$};
\end{tikzpicture}
\end{split}
\mbox{ .}
\end{equation} 
\end{definition}

We present two preparatory results about a non-constant full relation in Lemmas~\ref{lem:xyequiv} and~\ref{lem:t_odd}. 

\begin{lemma}
\label{lem:xyequiv} 
Suppose that $\{S_1,S_2,\dots,S_t\}$ is a set of F-squares of type $\ff$ satisfying a non-constant full relation with respect to $x$ and~$y$. Then
\begin{enumerate}[(i)]
\item $x \equiv y \equiv t\lambda \pmod{2},$
\item $m\lambda \equiv 0 \pmod{2}.$
\end{enumerate}
\end{lemma}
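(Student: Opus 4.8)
The plan is to exploit the fact that summing the indicator squares $I_1(S_k)$ over $k$ yields an array whose every row sum and every column sum equals $t\lambda$, and then to read off these sums modulo $2$ from the block structure of \cref{def:full_relation}. Write $T_0 = \sum_{k=1}^t I_1(S_k)$. Since each $S_k$ is an F-square of type $\ff$, the symbol $1$ occurs exactly $\lambda$ times in each row and in each column, so every row sum and every column sum of $I_1(S_k)$ equals $\lambda$; summing over $k$, every row sum and every column sum of $T_0$ equals $t\lambda$. Permuting rows and columns changes neither the multiset of row sums nor that of column sums, so I may assume $T_0 \bmod 2$ is already in the displayed block form.

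Next I would reduce these sums modulo $2$. For any integer array the parity of a row sum equals the parity of the corresponding row sum of its elementwise reduction mod $2$, and similarly for columns. Hence each row sum of $T_0 \bmod 2$ is $\equiv t\lambda$ and each column sum is $\equiv t\lambda \pmod 2$. I then compute these parities directly from the block structure: a row in the top band (one of the first $x$ rows) has $y$ entries equal to $0$ and $m\lambda-y$ entries equal to $1$, so its sum is $\equiv m\lambda-y$; a row in the bottom band has sum $\equiv y$; dually, a column in the left band has sum $\equiv m\lambda-x$ and a column in the right band has sum $\equiv x$. Equating each to $t\lambda$ modulo $2$ gives the four congruences
\[
t\lambda \equiv m\lambda-y, \qquad t\lambda \equiv y, \qquad t\lambda \equiv m\lambda-x, \qquad t\lambda \equiv x \pmod 2,
\]
where the first (respectively second, third, fourth) is available precisely when the top band (respectively bottom, left, right band) is nonempty.

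Finally I would assemble these, proving (ii) first. By the non-degeneracy hypothesis $x,y$ are not both in $\{0,m\lambda\}$: if $x \notin \{0,m\lambda\}$ then the top and bottom bands are both nonempty and adding the first two congruences gives $m\lambda \equiv 0$; otherwise $y \notin \{0,m\lambda\}$, so the left and right bands are both nonempty and adding the last two congruences again gives $m\lambda \equiv 0$. With $m\lambda \equiv 0 \pmod 2$ established, each congruence collapses: any available row congruence yields $y \equiv t\lambda$ (since $m\lambda-y \equiv -y \equiv y$), and as every row lies in some band at least one row congruence is available; likewise any column congruence yields $x \equiv t\lambda$. This gives (i).

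The only real care needed is the bookkeeping for degenerate bands: when $x$ or $y$ equals $0$ or $m\lambda$, one band is empty and its congruence is unavailable, so I would structure the argument to prove (ii) first using whichever pair of bands the non-degeneracy condition guarantees to be nonempty, and only then deduce (i), which requires just one nonempty band in each direction. Beyond this, the argument is a routine parity count and presents no substantive obstacle.
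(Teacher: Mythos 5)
Your proposal is correct and follows essentially the same route as the paper's proof: form $V=\sum_k I_1(S_k)$, observe that every row and column sum equals $t\lambda$, reduce modulo $2$, and read the parities $y$, $m\lambda-y$, $x$, $m\lambda-x$ off the block structure. The only difference is organizational --- you establish (ii) first via a case split on which of $x,y$ avoids $\{0,m\lambda\}$ and then harvest (i) from any nonempty band, whereas the paper assumes $0<x<m\lambda$ by symmetry and cases on whether $y=0$; both handle the degenerate bands correctly.
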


\begin{proof}
Let $V = (V_{ij}) = \sum_k I_1(S_k)$. 
For each row $i$ of $V$, we have
$$
\sum_j V_{ij} = \sum_k \sum_j I_1(S_k)_{ij} = \sum_k \lambda
$$
because $S_k$ is an F-square of type $\ff$. Reduce modulo 2 to give
\begin{equation} \label{eqn:mod2rowsum}
\sum_j(V \bmod 2)_{ij} \equiv t\lambda \pmod{2} \quad \mbox{for each $i$}.
\end{equation}
Similarly, 
\begin{equation} \label{eqn:mod2columnsum}
\sum_i(V \bmod 2)_{ij} \equiv t\lambda \pmod{2} \quad \mbox{for each $j$}.
\end{equation}
By symmetry, we may assume that $0 < x < m\lambda$. With reference to \eqref{eqn:T_0mod2}, take $i = x+1$ in \eqref{eqn:mod2rowsum} to show that
\begin{equation}\label{eqn:yequiv}
y \equiv t\lambda \pmod{2},
\end{equation}
and take $i =1$ in \eqref{eqn:mod2rowsum} to show 
$$
m\lambda - y \equiv  t\lambda \pmod{2}.
$$
Combining with \eqref{eqn:yequiv} establishes $(ii)$.

Next take $j = 1$ in \eqref{eqn:mod2columnsum}. In the case $y = 0$, we obtain 
$$
x \equiv t\lambda \pmod{2},
$$ 
which with \eqref{eqn:yequiv} establishes $(i)$. Otherwise $y>0$, and then
$$
m\lambda - x \equiv t\lambda \pmod{2},
$$
which with $(ii)$ and \eqref{eqn:yequiv} establishes $(i)$.
\end{proof}

The condition in \cref{def:full_relation} that $x,y$ do not both belong to $\{0,m\lambda\}$ ensures that the array $\big(\sum_{k=1}^t I_1(S_k)\big) \bmod 2$ does not equal the constant array $\mathbf{0}$ or~$J$. 
Without this condition, both conclusions $(i)$ and $(ii)$ of \cref{lem:xyequiv} can fail, for example for the three F-squares of type $F(3;1)$ given by  
\[
S_1 = \begin{bmatrix}
1 &2 &3 \\
3 &1 &2 \\
2 &3 &1
\end{bmatrix}, \quad
S_2 = \begin{bmatrix}
2 &3 &1 \\
1 &2 &3 \\
3 &1 &2
\end{bmatrix}, \quad
S_3 = \begin{bmatrix}
3 &1 &2 \\
2 &3 &1 \\
1 &2 &3
\end{bmatrix},
\]
which satisfy 
\[
I_1(S_1) + I_1(S_2) + I_1(S_3) = \begin{bmatrix}
1 &1 &1 \\
1 &1 &1 \\
1 &1 &1
\end{bmatrix}
\]
and so fulfil all conditions of \cref{def:full_relation} with $(x,y)=(3,0)$ except that $x, y \in \{0,3\}$.

\begin{lemma}
\label{lem:t_odd}
Let $\lambda$ be odd, and suppose that $\{S_1,S_2,\dots,S_t\}$ is a set of MOFS of type $\ff$ satisfying a non-constant full relation. Then $t$ is odd. 
\end{lemma}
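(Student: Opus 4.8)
The plan is to work modulo $2$ throughout, exploiting that $\lambda$ is odd (so $\lambda^2$ is odd) while $m$ is even by \cref{lem:xyequiv}$(ii)$. I would write $v_k = I_1(S_k) \bmod 2$ for each $k$, and $W = \big(\sum_k I_1(S_k)\big) \bmod 2 = \sum_k v_k$, so that $W$ is precisely the $\{0,1\}$ array of \eqref{eqn:T_0mod2}. Reducing the orthogonality relation \eqref{I(S)I(S')} modulo $2$ gives $v_k \circ v_\ell \equiv \lambda^2 \equiv 1 \pmod 2$ whenever $k \neq \ell$, while \eqref{I(S)^2} gives $v_k \circ v_k \equiv m\lambda^2 \equiv 0 \pmod 2$; in other words the Gram matrix of the $v_k$ modulo $2$ is the all-ones matrix with zero diagonal.

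The key step is to evaluate $W \circ v_k \pmod 2$ in two different ways for a fixed $k$. Expanding $W = \sum_\ell v_\ell$ and applying the Gram relations above, every cross term contributes $1$ and the diagonal term contributes $0$, so $W \circ v_k \equiv t - 1 \pmod 2$. I expect the main conceptual obstacle to be recognizing that one must pair $W$ with a single indicator square rather than with itself: the self-pairing $W \circ W \equiv t(t-1) \equiv 0 \pmod 2$ is automatically trivial and carries no information about the parity of $t$, whereas pairing against one $v_k$ isolates the term $t-1$.

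For the second evaluation I would use the block structure. After the permutation of rows and columns supplied by \cref{def:full_relation}, the $(i,j)$ entry of $W$ is congruent modulo $2$ to $r_i + c_j$, where $r_i = 1$ exactly for the $x$ distinguished rows and $c_j = 1$ exactly for the $y$ distinguished columns. Since each row and each column of $v_k$ sums to $\lambda$, summing $(r_i + c_j)(v_k)_{ij}$ over all cells yields $W \circ v_k \equiv \lambda(x+y) \equiv x + y \pmod 2$, using that $\lambda$ is odd. Comparing the two evaluations gives $t - 1 \equiv x + y \pmod 2$. Finally, \cref{lem:xyequiv}$(i)$ gives $x \equiv y \equiv t\lambda \equiv t \pmod 2$, so $x + y \equiv 0 \pmod 2$ and hence $t - 1 \equiv 0 \pmod 2$; that is, $t$ is odd. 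The routine verifications left to fill in are that simultaneous row and column permutations preserve the F-square and orthogonality properties (so the pairing computations are unaffected and depend only on the number $x$, $y$ of distinguished rows and columns), together with the elementary modular reductions of \eqref{I(S)^2} and \eqref{I(S)I(S')}.
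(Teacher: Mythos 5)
Your proposal is correct and takes essentially the same approach as the paper: both proofs double-count $\big(\sum_k I_1(S_k) \bmod 2\big) \circ I_1(S_k)$ for a single fixed square (the paper fixes $k=1$), obtaining $t-1$ from orthogonality and $\lambda(x+y)\equiv 0 \pmod 2$ from the block structure combined with \cref{lem:xyequiv}. The only cosmetic difference is in the second evaluation, where you use the decomposition $W_{ij}\equiv r_i+c_j \pmod 2$ together with row and column sums, while the paper counts the block sums of $I_1(S_1)$ via a parameter $\alpha$ that cancels modulo $2$.
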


\begin{proof} 
Let the relation be with respect to $x$ and $y$, and let $V = \sum_k I_1(S_k)$. 
Using \eqref{I(S)I(S')}, the orthogonality of $S_1$ with each $S_k$ for $k > 1$ gives 
\begin{align*}
(t-1)\lambda^2 
  &= \sum_{k>1} I_1(S_k) \circ I_1(S_1)\\
  &= \sum_{k} I_1(S_k) \circ I_1(S_1) - I_1(S_1) \circ I_1(S_1)\\
  &= V \circ I_1(S_1) - m\lambda^2
\end{align*}
by \eqref{I(S)^2}. By \cref{lem:xyequiv}~$(ii)$ and the assumption that $\lambda$ is odd, reduction modulo 2 gives
\begin{equation} 
\label{eqn:s1prod_mod2}
t - 1 \equiv (V \bmod 2) \circ I_1(S_1) \pmod{2}. \\
\end{equation}
Since $S_1$ is an F-square of type $\ff$, the sum of the entries of the blocks of $I_1(S_1)$ corresponding to the blocks of $V \bmod 2$ shown in \eqref{eqn:T_0mod2} is
\begin{equation}
\label{eqn:F-square_blocks} 
\begin{split}
\begin{tikzpicture}[scale=0.7]
\draw (0.2,0)--(0,0)--(0,4)--(0.2,4);
\draw (6.8,0)--(7,0)--(7,4)--(6.8,4);
\node at (-0.5,3) {$x$};
\node at (-1.4,1.3) {$m\lambda-x$};
\node at (1,4.5) {$y$};
\node at (4.5,4.5) {$m\lambda-y$};
\draw (2,0)--(2,4);
\draw (0.1,2.5)--(6.9,2.5);
\node at (1,3.3) {$\alpha$};
\node at (4.5,3.3) {$x\lambda-\alpha$};
\node at (1,1.3) {$y\lambda-\alpha$};
\node at (4.5,1.3) {$m\lambda^2-(x+y)\lambda+\alpha$};
\end{tikzpicture}
\end{split}
\end{equation}
for some non-negative integer $\alpha$. 
Therefore from \eqref{eqn:s1prod_mod2} we obtain
\begin{align*}
t-1 
 &\equiv (x\lambda - \alpha) + (y\lambda -\alpha)  \\
 &\equiv 0 \pmod{2}
\end{align*}
by \cref{lem:xyequiv}~$(i)$, so $t$ is odd.
\end{proof}

We now use Lemmas~\ref{lem:xyequiv} and~\ref{lem:t_odd} to prove the desired maximality criterion.

\begin{theorem} 
\label{thm:maximal_m}
Let $\lambda$ be odd, and suppose that $\{S_1,S_2,\dots,S_t\}$ is a set of MOFS of type $\ff$ satisfying a non-constant full relation. Then $\{S_1,S_2,\dots,S_t\}$ is a maximal set of MOFS. 
\end{theorem}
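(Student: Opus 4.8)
The plan is to argue by contradiction. Suppose $\{S_1,\dots,S_t\}$ were not maximal, so that there is an F-square $S_{t+1}$ of type \ff{} for which $\{S_1,\dots,S_{t+1}\}$ is still a set of MOFS. I will show that the single indicator square $W := I_1(S_{t+1})$ cannot be orthogonal, in the sense of \eqref{I(S)I(S')}, to every one of $S_1,\dots,S_t$ while $\sum_{k=1}^t I_1(S_k)$ has the block structure \eqref{eqn:T_0mod2}, by exhibiting a clash of parities modulo~$2$. I would fix once and for all the permutation of rows and columns that realizes the non-constant full relation and apply it also to $S_{t+1}$; this leaves $W$ a $\{0,1\}$ array in which every row sum and every column sum equals~$\lambda$, and aligns its blocks with those of \eqref{eqn:T_0mod2}.

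First I would assemble the parity data already available. Since $\lambda$ is odd and the set satisfies a non-constant full relation, \cref{lem:t_odd} gives that $t$ is odd. Then \cref{lem:xyequiv}~$(i)$ gives $x \equiv y \equiv t\lambda \pmod{2}$; because $t$ and $\lambda$ are both odd, this forces both $x$ and $y$ to be odd, and hence $x+y$ to be even.

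The heart of the argument is a double count of $V \circ W$, where $V := \sum_{k=1}^t I_1(S_k)$. On the one hand, orthogonality of $S_{t+1}$ with each $S_k$ together with \eqref{I(S)I(S')} gives $V \circ W = \sum_{k=1}^t I_1(S_k) \circ W = t\lambda^2$, which is odd. As $W$ is a $\{0,1\}$ array, $V \circ W \equiv (V \bmod 2) \circ W \pmod{2}$, so $(V \bmod 2) \circ W$ is odd. On the other hand, $(V \bmod 2) \circ W$ simply counts the entries of $W$ lying in the two $J$-blocks of \eqref{eqn:T_0mod2}. Distributing the ones of $W$ among the four blocks exactly as in \eqref{eqn:F-square_blocks}, with the top $x$ rows of $W$ holding $x\lambda$ ones and the left $y$ columns holding $y\lambda$ ones, this count equals $(x+y)\lambda - 2\beta$ for some non-negative integer $\beta$, hence is congruent to $(x+y)\lambda \equiv 0 \pmod{2}$. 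This contradicts the oddness established above, so no such $S_{t+1}$ can exist and the set is maximal.

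I expect the only place needing care, rather than genuine difficulty, to be the block-counting step: one must apply the fixed row and column permutation consistently to $S_{t+1}$, so that the blocks of $W$ match those of \eqref{eqn:T_0mod2}, and then invoke the F-square row- and column-sum conditions for $W$ to evaluate the two off-diagonal blocks, which is the same bookkeeping already performed for $I_1(S_1)$ in the proof of \cref{lem:t_odd}. The conceptual step that drives everything is locating the parity clash between the odd quantity $t\lambda^2$ forced by orthogonality and the even quantity $(x+y)\lambda$ forced by the full relation.
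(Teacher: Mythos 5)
Your proposal is correct and follows essentially the same route as the paper's own proof: assume an orthogonal F-square $S$ exists, double-count $V \circ I_1(S) = t\lambda^2$ (odd by \cref{lem:t_odd} and $\lambda$ odd), reduce modulo $2$, and contradict this with the block count \eqref{eqn:F-square_blocks}, which together with \cref{lem:xyequiv}~$(i)$ forces $(V \bmod 2) \circ I_1(S) \equiv (x+y)\lambda \equiv 0 \pmod{2}$. The only cosmetic differences are that you make explicit the alignment of the row/column permutation with the new square and that you note $x,y$ are individually odd rather than merely congruent, neither of which changes the argument.
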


\begin{proof}
Let the relation be with respect to $x$ and $y$, and let $V = \sum_k I_1(S_k)$. 
Suppose, for a contradiction, that $S$ is an F-square of type $\ff$ that is orthogonal to each~$S_k$. Then by \eqref{I(S)I(S')},
\[
t\lambda^2 = \sum_k  I_1(S_k) \circ I_1(S) = V \circ I_1(S).
\]
By \cref{lem:t_odd} and the assumption that $\lambda$ is odd, reduction modulo~$2$ gives
\begin{equation} \label{eqn:(T_0dotIR1)}
1 \equiv (V \bmod 2 ) \circ I_1(S) \pmod{2}.
\end{equation}
Since S is an F-square of type $\ff$, the sum of the entries of the blocks of $I_1(S)$ corresponding to the blocks of $V \bmod 2$ shown in \eqref{eqn:T_0mod2} is as shown in \eqref{eqn:F-square_blocks} for some non-negative integer~$\alpha$. Therefore from \eqref{eqn:(T_0dotIR1)},
\[
1 \equiv (x\lambda - \alpha)+ (y\lambda -\alpha) \equiv 0 \pmod{2}
\]
by \cref{lem:xyequiv}~$(i)$, which is a contradiction.
\end{proof}

The criterion of \cref{thm:maximal_m} for a set of MOFS of type $\ff$ to be maximal requires the set to satisfy a non-constant full relation, and we know from \cref{lem:xyequiv}~$(ii)$ that this requires $m$ to be even. 
We now derive a more restrictive necessary condition for this criterion to hold. 
\begin{proposition} 
\label{prop:mod8}
Let $\lambda$ be odd, and suppose that $\{S_1,S_2,\dots,S_t\}$ is a set MOFS of type $\ff$ satisfying a non-constant full relation with respect to $x$ and~$y$. Then $t \equiv m(x+y) - (m+1)\pmod{8}$. 
\end{proposition}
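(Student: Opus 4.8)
The plan is to evaluate $V\circ V$ modulo $8$ in two ways for the array $V=\sum_{k=1}^{t}I_1(S_k)$ and to equate the results. Throughout I will use that $\lambda$ is odd (so $\lambda^2\equiv1\pmod 8$), that $t$ is odd by \cref{lem:t_odd}, and that $m\equiv 0$ and $x\equiv y\equiv 1\pmod 2$ by \cref{lem:xyequiv} together with $t$ odd. For the first (global) evaluation I separate the $t$ diagonal terms from the $t(t-1)$ off-diagonal terms in $V\circ V=\sum_{k,\ell}I_1(S_k)\circ I_1(S_\ell)$; then \eqref{I(S)^2} and \eqref{I(S)I(S')} give $V\circ V=t\lambda^2(t+m-1)$, which reduces to $V\circ V\equiv 1+tm-t\pmod 8$ since $\lambda^2\equiv t^2\equiv 1$.

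For the second evaluation I use the block structure \eqref{eqn:T_0mod2}, which records exactly the parity of each entry of $V$, and I split $\sum_{i,j}V_{ij}^2$ according to that parity. Each odd entry contributes $V_{ij}^2\equiv 1\pmod 8$, and the odd entries are precisely the two $J$ blocks, of total count $N:=x(m\lambda-y)+(m\lambda-x)y=m\lambda(x+y)-2xy$. Each even entry satisfies $V_{ij}^2\equiv 2V_{ij}\pmod 8$, so the even entries together contribute $2\sum_{V_{ij}\text{ even}}V_{ij}$, giving $V\circ V\equiv N+2\sum_{V_{ij}\text{ even}}V_{ij}\pmod 8$. To pin down the last sum modulo $4$ I use that every line sum of $V$ equals $t\lambda$ (each $I_1(S_k)$ has all line sums $\lambda$); summing $V$ over the first $x$ rows and the first $y$ columns expresses all four block sums in terms of the single quantity $s$, the sum of the even top-left block, and gives $\sum_{V_{ij}\text{ even}}V_{ij}=(m\lambda-x-y)t\lambda+2s$. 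Crucially $s$ is a sum of even integers and so is itself even, whence $2\sum_{V_{ij}\text{ even}}V_{ij}\equiv 2(m\lambda-x-y)t\lambda\pmod 8$.

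It then remains to reduce each term of $N+2(m\lambda-x-y)t\lambda$ modulo $8$, using that $m$ and $x+y$ are even, $\lambda$ and $t$ are odd, and $xy\equiv (x+y)-1\pmod 4$ (which follows from $(x-1)(y-1)\equiv 0\pmod 4$). Equating the result with the global value $1+tm-t$ and collecting the terms in $t$ produces a linear congruence of the form $t\bigl(2(x+y)-m-1\bigr)\equiv m(x+y)-2(x+y)+1\pmod 8$ whose coefficient of $t$ is odd. Since every odd residue is its own inverse modulo $8$, I solve by multiplying through by $2(x+y)-m-1$; expanding and reducing (all the genuinely quadratic cross-terms vanish mod $8$) yields $t\equiv m(x+y)-(m+1)\pmod 8$, as required.

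The main obstacle is that \eqref{eqn:T_0mod2} constrains only the parities of the entries of $V$, not their values, so a priori the even entries seem uncontrollable modulo $8$. The argument survives because of two observations: $n^2\equiv 2n\pmod 8$ for even $n$ (which linearizes the even contribution) and the evenness of the even-block sum $s$ (which kills its doubled contribution mod $8$); after these, the line-sum bookkeeping determines $\sum_{V_{ij}\text{ even}}V_{ij}$ modulo $4$, which is exactly the precision the surviving factor of $2$ demands.
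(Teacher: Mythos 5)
Your proof is correct and is essentially the paper's own argument in different packaging: your parity split of $V \circ V$ modulo $8$ (each odd entry contributing $1$, each even entry contributing $2V_{ij}$) is precisely the paper's congruence $\sum_r (2r-r^2)x_r \equiv \sum_{r\text{ odd}}(2r-1)x_r \pmod{8}$ applied to the occurrence counts $x_r$ of the entries of $V$, and your even block-sum $s$ plays exactly the role of the paper's parameter $\beta$, with the same line-sum bookkeeping forcing it out of the computation. The only cosmetic difference is the endgame: the paper first reduces the $t$-linear terms using $ab \equiv a \pmod{8}$ for $a \equiv 0 \pmod 4$ and $b$ odd, arriving at $t(m+1) \equiv m(x+y)-1 \pmod 8$, whereas you collect them into the coefficient of $t$ and solve $t\bigl(2(x+y)-m-1\bigr) \equiv m(x+y)-2(x+y)+1 \pmod 8$; both resolutions rest on the fact that odd residues are self-inverse modulo $8$.
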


\begin{proof}
Let $V = \sum_k I_1(S_k)$, and let $x_r$ be the number of occurrences of $r$ in the array~$V$. We shall calculate expressions for the four quantities $\sum_r rx_r$ and $\sum_r r^2x_r$ and $\sum_{r\text{ odd}}x_r$ and $\sum_{r\text{ odd}}rx_r$, and substitute them into the congruence  
\begin{align} 
\sum_r (2r-r^2)x_r 
 & \equiv \sum_{r\text{ odd}} r(2-r)x_r \pmod{8}    \nonumber \\
 & \equiv \sum_{r\text{ odd}} (2r-1)x_r \pmod{8}  \label{eqn:sum_equiv_mod8}.
\end{align}

We have 
\begin{align*}
\sum_{r= 0}^t r x_r 
 &= V \circ J \\
 &= \sum_k I_1(S_k) \circ J \\
 &= tm\lambda^2
\end{align*}
by \eqref{I(S)^2}, and
\begin{align*}
\sum_{r=0}^t r^2x_r &= V \circ V \\
 &= \sum_{k,\ell} I_1(S_k) \circ I_1(S_\ell) \\
 &= \sum_{k \neq \ell} I_1(S_k) \circ I_1(S_\ell) + \sum_k I_1(S_k) \circ I_1(S_k) \\
 &=t(t-1)\lambda^2 + tm\lambda^2
\end{align*}
by \eqref{I(S)I(S')} and \eqref{I(S)^2}.

The expression $\sum_{r \text{ odd}} x_r$ is the sum of the entries of $V \bmod 2$, so with reference to the block structure of \eqref{eqn:T_0mod2} we have
\[
\sum_{r \text{ odd}} x_r = x(m\lambda - y) + y(m\lambda - x) .
\]

Since each $S_k$ is an F-square of type $\ff$, the sum of the entries of 
$V = \sum_{k=1}^t I_1(S_k)$ has the block structure
\begin{equation}
\label{eqn:T_0_blocks}
\begin{split}
\begin{tikzpicture}[scale=0.7]
\draw (0.2,0)--(0,0)--(0,4)--(0.2,4);
\draw (6.8,0)--(7,0)--(7,4)--(6.8,4);
\node at (-0.5,3) {$x$};
\node at (-1.4,1.3) {$m\lambda-x$};
\node at (1,4.5) {$y$};
\node at (4.5,4.5) {$m\lambda-y$};
\draw (2,0)--(2,4);
\draw (0.1,2.5)--(6.9,2.5);
\node at (1,3.3) {$\beta$};
\node at (4.5,3.3) {$tx\lambda-\beta$};
\node at (1,1.3) {$ty\lambda-\beta$};
\node at (4.5,1.3) {$tm\lambda^2-t(x+y)\lambda+\beta$};
\end{tikzpicture}
\end{split}
\end{equation}
for some non-negative integer $\beta$. 
The expression $\sum_{r \text{ odd}} r x_r$ is the sum of the odd entries of $V$, so
\begin{align*}\label{eqn:sumT_0_odd}
\sum_{r \text{ odd}} r x_r &= V \circ (V \bmod 2) \nonumber \\
&= (tx\lambda - \beta) + (ty\lambda - \beta)
\end{align*}
from \eqref{eqn:T_0_blocks} and \eqref{eqn:T_0mod2}.

Substitute the four calculated quantities into \eqref{eqn:sum_equiv_mod8} to give 
\begin{equation}
\label{eqn:mtlambda_mod8sum}
t\lambda^2(m+1-t) \equiv 2t(x+y)\lambda - 4\beta -m(x+y)\lambda + 2xy \pmod{8}.
\end{equation}
By comparison of \eqref{eqn:T_0_blocks} with \eqref{eqn:T_0mod2}, we see that $\beta$ is even. 
Also $\lambda$ is odd by assumption, and $m$ is even and $x, y, t$ are odd by Lemmas~\ref{lem:xyequiv} and~\ref{lem:t_odd}.
Note that for integers $a$ and $b$ with $a \equiv 0 \pmod{4}$ and b odd,
\begin{equation}
\label{ab_equiv_a}
ab\equiv a \equiv -a \pmod{8}. 
\end{equation}
Therefore \eqref{eqn:mtlambda_mod8sum} simplifies to 
\begin{equation*}
t(m+1)-1 \equiv m(x+y) + 2(x+y+xy) \pmod{8}.
\end{equation*}
Since $x$ and $y$ are odd  we have 
$2(x + y + xy) = 2(x+1)(y+1)-2 \equiv -2 \pmod{8}$, so that
\[
t(m+1) \equiv m(x+y) -1 \pmod{8}.
\]
Multiply by the odd integer $m+1$ and use \eqref{ab_equiv_a} to give 
\[
t \equiv m(x+y) - (m+1) \pmod{8}.
\]
\end{proof}

\begin{corollary}
\label{cor:mod4}
Let $\lambda$ be odd, and suppose that $\{S_1,S_2,\dots ,S_t\}$ is a set of MOFS of type $\ff$ satisfying a non-constant full relation. Then $t \equiv m-1 \pmod{4}$.
\end{corollary}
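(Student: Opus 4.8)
The plan is to obtain the modulus-$4$ congruence as an immediate consequence of \cref{prop:mod8}, which already supplies the modulus-$8$ congruence $t \equiv m(x+y) - (m+1) \pmod{8}$. Reducing this modulo $4$ gives
\[
t \equiv m(x+y) - (m+1) \pmod{4},
\]
so the entire task reduces to simplifying the right-hand side using the parity information already established.

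First I would record the relevant parities. By \cref{lem:xyequiv}~$(ii)$ the integer $m$ is even, and by \cref{lem:t_odd} together with the hypothesis that $\lambda$ is odd, $t$ is odd; hence $t\lambda$ is odd, and \cref{lem:xyequiv}~$(i)$ then forces $x \equiv y \equiv 1 \pmod{2}$. In particular $x+y$ is even.

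Next I would exploit that a product of two even integers is divisible by $4$: since both $m$ and $x+y$ are even, $m(x+y) \equiv 0 \pmod{4}$, collapsing the congruence to $t \equiv -(m+1) \pmod{4}$. Finally, because $m$ is even we have $2m \equiv 0 \pmod{4}$, so $-m \equiv m \pmod{4}$ and therefore $-(m+1) = -m-1 \equiv m-1 \pmod{4}$, which is the claimed conclusion.

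There is no substantial obstacle here: the real work is carried by \cref{prop:mod8}, and the corollary is a routine reduction. The only point requiring a little care is the final step, where one must invoke the evenness of $m$ (rather than merely reading off $-(m+1)$) to recognize that $-(m+1)$ and $m-1$ coincide modulo $4$.
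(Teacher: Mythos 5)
Your proof is correct and follows essentially the same route as the paper: reduce the conclusion of \cref{prop:mod8} modulo $4$, kill the term $m(x+y)$ using the evenness of $m$ and $x+y$ from \cref{lem:xyequiv}, and convert $-(m+1)$ to $m-1$ via $2m \equiv 0 \pmod 4$. The only (harmless) difference is that you detour through \cref{lem:t_odd} to get $x,y$ odd, whereas the paper obtains the evenness of $x+y$ directly from \cref{lem:xyequiv}~$(i)$, which gives $x \equiv y \pmod 2$ without needing the parity of $t$.
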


\begin{proof}
Let the relation be with respect to $x$ and $y$. By \cref{lem:xyequiv} we have that $x+y$ and $m$ are even. Reduce modulo~$4$ the conclusion of \cref{prop:mod8}.
\end{proof}

By taking the special case $m=2$ of \cref{thm:maximal_m}, \cref{prop:mod8}, \cref{cor:mod4} (and noting that in this case the condition in \cref{def:full_relation} that $x, y$ do not both belong to $\{0, m\lambda\}$ can be removed),
we recover Theorems 4, 6, 5 of \cite{britz}, respectively.
These theorems were obtained in \cite{britz} by regarding a set of MOFS of type $F(2\lambda;\lambda)$ as an orthogonal array and using counting arguments. The analysis presented here, using indicator squares, streamlines the arguments of \cite{britz} and allows us to deal with all even~$m>2$.

\section*{Acknowledgements}
The authors are grateful to Ian Wanless for helpful discussions at the \emph{14th International Conference on Finite Fields and their Applications} in Vancouver, BC in June 2019, and for kindly sharing extensive data on MOFS as well as a preliminary version of~\cite{britz}.

%\bibliographystyle{plain}
%\bibliography{mofs}
%
% reorder van Greevenbroek and Jedwab reference by hand

\end{document}